\numberwithin{equation}{section}
\theoremstyle{plain}
\newtheorem{theorem}[equation]{Theorem}
\newtheorem{proposition}[equation]{Proposition}
\newtheorem{corollary}[equation]{Corollary}
\newtheorem{lemma}[equation]{Lemma}
\newtheorem{question}[equation]{Question}
\newtheorem{conjecture}[equation]{Conjecture}
\theoremstyle{definition}
\newtheorem{defn}[equation]{Definition}
\theoremstyle{remark}
\newcommand{\beq}{\begin{equation}}
\newcommand{\eeq}{\end{equation}}
\newcommand{\st}{\left\vert\right.}
\DeclareMathOperator{\Aut}{{Aut}}
\DeclareMathOperator{\GK}{GKdim}
\DeclareMathOperator{\gr}{gr}
\newcommand{\mc}{\mathcal}
\newcommand{\kk}{{\Bbbk}}
\newcommand{\ZZ}{{\mathbb Z}}
\newcommand{\PP}{{\mathbb P}}
\newcommand{\NN}{{\mathbb N}}
\newcommand{\sL}{\mc{L}}
\newcommand{\sM}{\mc{M}}
\DeclareMathOperator{\GKdim}{GKdim}
\DeclareMathOperator{\Pic}{Pic}
\DeclareMathOperator{\Supp}{Supp}
\newcommand{\ver}[1]{^{(#1)}}
\title{Algebras in which every subalgebra is noetherian}
\author{D. Rogalski,  S. J. Sierra, and J. T. Stafford}
\address{(Rogalski)
Department of Mathematics, UCSD, La Jolla, CA 92093-0112, USA. }
\email{drogalsk@math.ucsd.edu}
 \address{(Sierra) School of Mathematics,
University of Edinburgh, Edinburgh EH9 3JZ, Scotland.}
\email{s.sierra@ed.ac.uk}
\address{(Stafford) School of Mathematics,  The University of Manchester,   Manchester M13 9PL,
England.}
\email{Toby.Stafford@manchester.ac.uk}
\thanks{The first author is partially supported by NSF grant DMS-0900981.}
\thanks{The second author was supported by an NSF Postdoctoral Research
Fellowship, grant DMS-0802935.}
\thanks{The third
   author is a Royal Society Wolfson Research Merit Award Holder.}
\subjclass[2010]{16P40, 16S38, 16W50, 16W70}
\keywords{Noetherian ring, twisted homogeneous coordinate ring, Sklyanin algebra,
supernoetherian ring}
\begin{document}
 \begin{abstract} We show that the twisted homogeneous coordinate rings of elliptic curves by infinite order automorphisms have the curious
property that \emph{every} subalgebra is both finitely generated and noetherian.  As a consequence, we show that a localisation of a generic
Skylanin algebra has the same property.
 \end{abstract}
 \maketitle

\section{Introduction}\label{INTRO} Throughout $\kk$ will denote  an algebraically closed field and all algebras will be $\kk$-algebras.
It is not hard to show that if $C$ is a commutative $\kk$-algebra with the property that every subalgebra is finitely generated (or noetherian),
then $C$ must be of Krull dimension at most one.  (See Section 3 for one possible proof.)  The aim of this note is to show that this property holds more generally in the noncommutative universe.

Before stating the result we need some notation. Let $X$ be a projective variety with invertible sheaf $\mathcal{M} $ and automorphism $\sigma$
and write $\mathcal{M}_n = \mathcal{M}\otimes \sigma^*(\mathcal{M})\cdots\otimes
(\sigma^{n-1})^*(\mathcal{M})$. 
Then the \emph{twisted  homogeneous
coordinate ring} of $X$ with respect to this data is the $\kk$-algebra
$B =B(X,\mathcal{M},\sigma)
=\bigoplus_{n \geq 0} H^0(X, \mathcal{M}_n)$, under a natural multiplication. 
This algebra is fundamental to the theory of noncommutative algebraic geometry; see for example \cite{ATV,St,SV}. In particular, if $S$ is the
3-dimensional Sklyanin algebra, as defined for example in \cite[Example~8.3]{SV},  then
$B(E,\mathcal{L},\sigma) =  S/gS$
for a central element $g\in S$ and some invertible sheaf $\mathcal{L}$ over an elliptic curve $E$.  
We note that $S$ is a graded ring that can be regarded as the ``coordinate ring of a noncommutative
 $\mathbb{P}^2$'' or as the ``coordinate ring of $\PP^2_{\rm nc}$''.  Under this analogy the ring
$A=(S[g^{-1}])_0$
 can be regarded as the noncommutative  (affine)   coordinate ring of
$\PP^2_{\rm nc}\smallsetminus E$.

With this notation, we can state the  main result of this paper.

\begin{theorem}\label{mainthm} {\rm (1)}   Let $B = B(E,\mathcal{M},\sigma)$, 
where $E$ is an elliptic curve, $\mathcal M$ is an invertible sheaf and $|\sigma|=\infty$.
Then every $\kk$-subalgebra of
$B$ is both finitely generated and noetherian.

{\rm (2)} Assume that $S$ is a $3$-dimensional Sklyanin algebra  that is not a
finite module over its centre and let $A=S[g^{-1}]_0$. Then every
$\kk$-subalgebra of
$A$ is both finitely generated and noetherian.
\end{theorem}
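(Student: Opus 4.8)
The plan is to deduce part (2) from part (1) by a filtration argument, and to prove part (1) by first reducing to graded subalgebras and then splitting according to Gelfand--Kirillov dimension.

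\emph{Reducing (2) to (1).} Recall that $g$ is central of degree $3$ and $S/gS = B := B(E,\mathcal L,\sigma)$ with $|\sigma|=\infty$. I would filter $A = S[g^{-1}]_0 = \bigcup_n S_{3n}g^{-n}$ by $F_nA = S_{3n}g^{-n}$; this is an exhaustive $\NN$-filtration since $S_{3n}g^{-n}\subseteq S_{3(n+1)}g^{-(n+1)}$ and $F_nA\cdot F_mA\subseteq F_{n+m}A$ by centrality of $g$. Because $F_nA/F_{n-1}A = (S_{3n}/gS_{3n-3})g^{-n}\cong (S/gS)_{3n}=B_{3n}$, one gets $\gr A \cong \bigoplus_{n\ge 0} B_{3n} = B^{(3)} = B(E,\mathcal L_3,\sigma^3)$, which is again a twisted homogeneous coordinate ring of an elliptic curve by an infinite-order automorphism, so part (1) applies to it. Given any subalgebra $C\subseteq A$, I would equip it with the induced filtration; then $\gr C$ embeds as a graded subalgebra of $\gr A = B^{(3)}$, hence is finitely generated and noetherian by part (1). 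Standard lifting results for filtered rings (if the associated graded is noetherian then so is the ring; a lift of algebra generators of $\gr C$ generates $C$) then show that $C$ is finitely generated and noetherian.

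\emph{Reducing (1) to graded subalgebras.} Filtering $B$ by $F_nB = \bigoplus_{i\le n} B_i$ gives $\gr B = B$, and for an arbitrary subalgebra $C\subseteq B$ the induced filtration has $\gr C$ equal to the algebra of leading forms, a graded subalgebra of $B$. By the same lifting results it suffices to prove that every graded subalgebra of $B$ is finitely generated and noetherian. Each such $C$ is a connected graded domain (as $C_0=\kk$ and $B$ is a domain) with $\dim_\kk C_n<\infty$, so $\GKdim C\in\{0,1,2\}$, and $\GKdim C=0$ forces $C=\kk$.

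\emph{The case $\GKdim C \le 1$.} Here $\dim_\kk C_n$ is bounded. I would choose $0\ne f\in C_e$ of least positive degree; multiplication by $f$ gives injections $C_n\hookrightarrow C_{n+e}$, so along each residue class modulo $e$ the dimensions are nondecreasing and bounded, hence eventually constant, giving $C_{n+e}=fC_n$ and likewise $C_{n+e}=C_nf$ for all $n\gg 0$. Thus $C$ is finitely generated both as a left and as a right $\kk[f]$-module, hence noetherian, and it is generated as a $\kk$-algebra by $f$ together with the finitely many graded pieces below the stabilisation bound, hence finitely generated.

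\emph{The case $\GKdim C = 2$: the main obstacle.} Now $\dim_\kk C_n$ grows linearly and $C$ is as large as $B$; the goal is to show that $B$ is a finitely generated module over $C$. Writing $C_n\subseteq H^0(E,\mathcal M_n)$ and analysing the fixed divisors of the linear systems $C_n$ together with their behaviour under the twisted multiplication (which shifts vanishing loci by $\sigma$), I would try to rule out the base loci growing without bound and to rule out $C$ descending to a proper subfield of $\kk(E)$. This is exactly where $|\sigma|=\infty$ must enter: an infinite $\sigma$-orbit cannot be absorbed into a bounded base locus, and $\sigma$ does not descend compatibly to a quotient curve, which should force the graded quotient ring of $C$ to coincide with that of a Veronese $B^{(e)}$. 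Once $C$ and $B^{(e)}$ are equivalent orders, $B$ becomes module-finite over $C$; an Eakin--Nagata argument then yields that $C$ is noetherian, and a graded Artin--Tate argument yields that $C$ is finitely generated. I expect this last case --- controlling the fixed divisors and establishing module-finiteness --- to be the technical heart, with the noetherianity of $B$ and the infinite order of $\sigma$ as the crucial inputs.
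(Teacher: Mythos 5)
Your two outer reductions are exactly the paper's: the filtration of $A=S[g^{-1}]_0$ whose associated graded ring is a Veronese of $B$ (the paper realises $A$ as $S^{(3)}/(g-1)S^{(3)}$ in Lemmas~\ref{prop:how to localise}, \ref{prop:assoc graded of localisation} and Corollary~\ref{cor:graded of T0}), and the passage from arbitrary to graded subalgebras via leading terms (Lemma~\ref{graded}); your $\GKdim \leq 1$ case is also essentially the easy case of the paper's Proposition~\ref{prop:supernoetherian for B}. The genuine gap is the case $\GKdim C = 2$, and the strategy you sketch for it fails at its first step. You cannot ``rule out $C$ descending to a proper subfield of $\kk(E)$'', because translations \emph{do} descend along isogenies: if $\pi\colon E \to X = E/H$ is the quotient by a finite subgroup and $\tau$ is translation by a point $p$ of infinite order, then $\pi\circ\tau = \sigma\circ\pi$ where $\sigma$ is translation by $\pi(p)$, again of infinite order. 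Pulling back sections along $\pi$ embeds $B(X,\mathcal{N},\sigma)$ as a graded subalgebra of $B(E,\pi^*\mathcal{N},\tau)$ of Gelfand--Kirillov dimension $2$ whose graded quotient ring involves the proper subfield $\kk(X)\subsetneq \kk(E)$. So your hoped-for conclusion --- that $Q_{\gr}(C)$ coincides with $Q_{\gr}(B^{(e)})$, hence that $B$ is module-finite over $C$ --- is false in general, and the Eakin--Nagata and Artin--Tate steps that depend on it cannot be run.

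The paper's proof instead embraces the subfield case: writing $Q_{\gr}(C)=\kk(X)[s^{\pm 1};\sigma]$, it observes that since $E$ has no point with a finite $\tau$-orbit, $X$ has no point with a finite $\sigma$-orbit, so $X$ is again elliptic and $\sigma$ an infinite-order translation; noetherianity of each finitely generated graded subalgebra is then quoted from Artin--Stafford \cite[Theorem~5.6]{AS}, a deep external theorem on connected graded domains of quadratic growth which your divisor-theoretic sketch would essentially have to reprove from scratch. Finite generation of \emph{all} graded subalgebras is then obtained not by lifting generators but by proving ACC on finitely generated graded subalgebras (which formally implies every graded subalgebra is finitely generated): given a chain $R(1)\subseteq R(2)\subseteq \cdots$, the paper stabilises the graded quotient rings (using that $\kk(E)$ has ACC on subfields properly containing $\kk$), then the Hilbert-function data $\dim_\kk R(n)_{ji}=a_n i-b_n$ of \cite[Proposition~6.4]{AS}, and finally uses \cite[Theorem~5.9]{AS} to trap the whole chain inside a single ring $B(X,\sL(1),\sigma^j)$ --- a twisted homogeneous coordinate ring on the curve $X$ determined by the chain, not on $E$ --- which is a finitely generated right module over $R(1)^{(j)}$, so the chain stabilises by noetherianity. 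In short, your global architecture matches the paper, but the core of the argument (noetherianity and ACC in Gelfand--Kirillov dimension $2$) is missing, and the specific route you propose toward it is blocked by the isogeny examples above.
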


We note that the rings $A$ and $B$ of the theorem both have Gelfand-Kirillov dimension $2$.  

The main result is proved in the next section, while in the final section we raise some related questions.  

\section{The result}\label{LOCAL}

Apart from Sklyanin algebras there are two further classes of  algebras $S$ that are 
associated to elliptic curves  and to which the results of this paper apply; these algebras are the
generic examples of Artin-Schelter regular algebras of dimension three, as discussed in \cite[Section~8]{SV}. 
So we slightly change our notation to encompass all of them. Each ring is a graded $\kk$-algebra $S=\bigoplus_{n\geq 0} S_n$ that is a domain with a central element $g\in S_d$ such that 
$S/gS\cong B=B(E,\mathcal{L},\sigma)$ for  an elliptic curve $E$, ample invertible sheaf $\mathcal{L}$ and automorphism $\sigma $. The only significant difference between the three cases  is that $\mathcal{L}$ can have degree 1, 2 or 3; with the Sklyanin algebra occurring when $\deg(\mathcal{L})=3$. In these cases $d=6$, $4$ and $3$, respectively. 
These rings $S$ will be called  \emph{elliptic algebras}. We will assume throughout that the automorphism 
$\sigma$ has $|\sigma|=\infty; $ equivalently $B$ (and $S$) is infinite dimensional over its centre. The proofs of these assertions for $\deg(\mathcal{L})=2,3$ follow from \cite[Theorem~2]{ATV} and  \cite[Theorem~II]{ATV2}, with a more detailed description of the rings being given in \cite{ASh}; see
 \cite[(10.4) and (10.17)]{ASh} for a description of the central element $g$. The case 
  of $\deg(\mathcal{L})=1$ follows from \cite[Theorem~1.4]{Ste} with   the  centrality of $g$ given by  \cite[Lemma~3.3.6]{Ste2}.

It will also be convenient to work with the $d$-Veronese ring
$T=S^{(d)}=\bigoplus T_i$, where $T_i=S_{di}$. One should note that $T/gT $
may be identified with $B(E, \sM, \tau)$, where  $\sM=\sL_d = \sL\otimes \sigma^*(\sL) \otimes \cdots\otimes (\sigma^{d-1})^*(\sL)$
and $\tau=\sigma^d$.
Much of this paper is concerned with the   algebra
$S[g^{-1}]_0=T[g^{-1}]_0$, and so
for   any graded subalgebra  $R$ of $ T$ with $g\in R$, we write $R^o = R[g^{-1}]_0$.

 As a second way of presenting $R^o$ we have:

\begin{lemma}\label{prop:how to localise}
 Let $A=\bigoplus_{n\geq 0}A_n$ be an $\NN$-graded $\kk$-algebra  with  a central regular
element $g \in A_1$. Then
$ A[g^{-1}]_0 \cong A/(g-1)A.$
\end{lemma}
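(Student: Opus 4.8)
The plan is to construct an explicit ``dehomogenisation'' ring homomorphism from $A$ onto $A[g^{-1}]_0$ whose kernel is exactly the two-sided ideal $(g-1)A$, and then invoke the first isomorphism theorem. Since $g$ is central, $g-1$ is central and $(g-1)A$ is genuinely a two-sided ideal, so the quotient $A/(g-1)A$ makes sense. First I would record the one structural fact I need about the localisation: because $g$ is a regular element, the canonical map $A \to A[g^{-1}]$ is injective, and the $\ZZ$-graded ring $A[g^{-1}]$ has degree-zero part $A[g^{-1}]_0 = \bigcup_{n\geq 0} A_n g^{-n}$ (every homogeneous element of degree $0$ can be brought to a common denominator $g^{-n}$ with numerator in $A_n$).

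Next I would define $\Theta \colon A \to A[g^{-1}]_0$ on homogeneous elements by $\Theta(a) = a g^{-n}$ for $a \in A_n$, extended additively over the decomposition $A = \bigoplus_n A_n$. A short computation using the centrality of $g$ shows that for $a \in A_m$ and $b \in A_n$ one has $\Theta(ab) = ab\,g^{-(m+n)} = (ag^{-m})(bg^{-n}) = \Theta(a)\Theta(b)$, so $\Theta$ is a ring homomorphism with $\Theta(1)=1$; it is surjective by the description of $A[g^{-1}]_0$ above. Moreover $\Theta(g-1) = g g^{-1} - 1 = 0$, so $(g-1)A \subseteq \Ker \Theta$ and $\Theta$ descends to a surjection $\bbar{\Theta} \colon A/(g-1)A \to A[g^{-1}]_0$.

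The only real work is the reverse inclusion $\Ker\Theta \subseteq (g-1)A$, which gives injectivity of $\bbar{\Theta}$. Here I would take $a = \sum_{i=0}^N a_i \in \Ker\Theta$ with $a_i \in A_i$, so that $\sum_i a_i g^{-i} = 0$ in $A[g^{-1}]$. Multiplying by $g^N$ and using that $A \hookrightarrow A[g^{-1}]$ is injective yields the genuine relation $\sum_i a_i g^{N-i} = 0$ in $A$. Subtracting this from $a$ gives $a = \sum_i a_i\bigl(1 - g^{N-i}\bigr)$, and since each $1 - g^{N-i}$ lies in $(g-1)A$ (because $g^m - 1 = (g-1)(g^{m-1}+\cdots+1)$ and $g$ is central), I conclude $a \in (g-1)A$. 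This makes $\bbar{\Theta}$ injective, hence an isomorphism. I expect the homogeneous-decomposition step, combined with the use of regularity of $g$ to clear denominators, to be the crux; the ring-homomorphism and surjectivity verifications are routine.
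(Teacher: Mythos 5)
Your proposal is correct and takes essentially the same route as the paper: the same dehomogenisation map $a \mapsto a g^{-n}$ on $A_n$, the same surjectivity observation, and the same identification of the kernel with $(g-1)A$ by clearing denominators. The only difference is cosmetic --- where the paper proves $\Ker \phi \subseteq (g-1)A$ by induction on the top degree $N$, you write the single closed-form identity $a = \sum_i a_i\bigl(1 - g^{N-i}\bigr)$, which is just the telescoped form of that induction.
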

\begin{proof}
 Consider the vector space homomorphism  $ \phi:  A  \to A[g^{-1}]_0 $ defined
by sending $r \in A_n  \mapsto rg^{-n}.$
This is easily seen to be  a surjective ring homomorphism.

 We claim that $\ker \phi = (g-1)A$.
Clearly $g-1 \in \ker \phi$. Conversely,
suppose that  $r = \sum_{i=0}^N a_i \in \ker \phi$, with $a_i \in A_i$ and $a_N \neq 0$.  Then
$0=\sum a_i g^{-i} $, so
$a_N = - \sum_{i=0}^{N-1} a_i g^{N-i} $.
Thus
\[ r'= r + (g-1) (\sum_{i=0}^{N-1} a_i g^{N-i-1}) \ \in\ \ker \phi\]
 and $r'$ satisfies $\deg r'\leq N-1$.  By induction, $r' \in (g-1)A$, and so $r \in (g-1)A$.
\end{proof}

\begin{lemma}\label{prop:assoc graded of localisation}
  Suppose that  $A$ is an $\NN$-graded $\kk$-algebra  with   a central regular element  $g \in A_1$.
Let  $\phi:  A \to  A/(g-1)A$ be the canonical surjection and define a filtration $\Lambda$
on $ A/(g-1)A$ by setting $\Lambda_n = \phi(A_{\leq n})$.  Then $\gr_{\Lambda} (A/(g-1)A) \cong A/gA$.
\end{lemma}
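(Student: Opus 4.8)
The plan is to build an explicit surjective homomorphism of graded $\kk$-algebras $\psi\colon A\to\gr_\Lambda(A/(g-1)A)$ and to identify its kernel as $gA$; the isomorphism theorem then gives the result. First I would record that $\Lambda$ is an exhaustive, multiplicative $\NN$-filtration: one has $\Lambda_{n-1}\subseteq\Lambda_n$, $\bigcup_n\Lambda_n=A/(g-1)A$ because $\phi$ is onto and $A=\bigcup_n A_{\le n}$, and $\Lambda_m\Lambda_n=\phi(A_{\le m})\phi(A_{\le n})\subseteq\phi(A_{\le m+n})=\Lambda_{m+n}$. Hence $\gr_\Lambda(A/(g-1)A)=\bigoplus_n\Lambda_n/\Lambda_{n-1}$ (with $\Lambda_{-1}=0$) really is a graded $\kk$-algebra. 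I then define $\psi$ degreewise as the composite $A_n\hookrightarrow A_{\le n}\xrightarrow{\phi}\Lambda_n\twoheadrightarrow\Lambda_n/\Lambda_{n-1}$, and set $\psi=\bigoplus_n\psi_n$. Each $\psi_n$ is onto, since any class in $\Lambda_n/\Lambda_{n-1}$ is represented by $\phi(a)$ with $a\in A_{\le n}$ and deleting the components of $a$ in degrees $<n$ alters $\phi(a)$ only modulo $\Lambda_{n-1}$; and checking on homogeneous elements shows $\psi_{m+n}(ab)=\psi_m(a)\psi_n(b)$, so $\psi$ is a surjective graded ring map.

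Next I would compute $\ker\psi$. The easy inclusion $gA\subseteq\ker\psi$ is immediate: since $g-1\in\ker\phi$ we have $\phi(g)=1$, so for $a\in A_{n-1}$ one gets $\phi(ga)=\phi(a)\in\Lambda_{n-1}$ and hence $\psi_n(ga)=0$. The reverse inclusion is the crux of the argument. Suppose $a\in A_n$ with $\psi_n(a)=0$, that is $\phi(a)\in\Lambda_{n-1}=\phi(A_{\le n-1})$; then $a-b=(g-1)c$ for some $b\in A_{\le n-1}$ and some $c=\sum_{i=0}^m c_i$ with $c_i\in A_i$ and $c_m\neq 0$. Comparing homogeneous components of $(g-1)c=gc-c$, the degree-$(m+1)$ component is $gc_m$, which is nonzero by the regularity of $g$; since $a-b\in A_{\le n}$ this forces $m+1\le n$, so $c$ has no component in degrees $\ge n$. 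Reading off the degree-$n$ components of $a-b=gc-c$ then gives $a=gc_{n-1}\in gA_{n-1}$. Thus $\ker\psi_n\subseteq gA_{n-1}=(gA)_n$, and therefore $\ker\psi\subseteq gA$.

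Combining the inclusions yields $\ker\psi=gA$, so $\psi$ descends to a graded $\kk$-algebra isomorphism $A/gA\xrightarrow{\ \sim\ }\gr_\Lambda(A/(g-1)A)$, as desired. I expect the one genuinely delicate point to be the inclusion $\ker\psi\subseteq gA$: it is precisely here that one must invoke regularity of $g$ (a zero divisor would let $c$ acquire spurious top-degree terms and break the degree count), and tracking the homogeneous components of $(g-1)c$ is the only real computation in the proof.
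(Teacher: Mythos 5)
Your proof is correct and takes essentially the same route as the paper: both construct the graded surjection $A \to \gr_{\Lambda}(A/(g-1)A)$ induced by $\phi$ (your $\psi$ is the paper's $\pi$) and identify its kernel as $gA$ degree by degree. Your top-degree-component argument using regularity of $g$ is precisely the justification that the paper compresses into its one displayed equality $\bigl(A_{\leq n-1} + (g-1)A\bigr)\cap A_n = \bigl(A_{\leq n-1} + gA_{n-1}\bigr)\cap A_n = gA_{n-1}$.
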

\begin{proof}  If we filter $A$ by $\Lambda'_n = A_{\leq n}$, then the quotient map $A\to A/(g-1)A$ is a filtered surjection and so it induces a
graded surjection $ \pi: A  \to \gr_{\Lambda} (A/(g-1)A)$.
Finally,  $$(\ker \pi)_n =  \bigl(A_{\leq n-1} + (g-1)A\bigr)\cap A_n
= \bigl(A_{\leq n-1} + gA_{n-1}\bigr) \cap A_n = g A_{n-1}.$$  Thus $\ker \pi = gA$.
\end{proof}

Combining the last two lemmas gives:

\begin{corollary}\label{cor:graded of T0}
 Let $T=S^{(d)}$ be the $d$-Veronese of an elliptic  algebra,  as defined above.  Then the grading on
$T$ induces a filtration $\Lambda$ on $T^o \cong T/(g-1)T$ for which  $\gr_\Lambda
T^o\cong B(E, \sM, \tau)$.  \qed
\end{corollary}

We will see that $T^o$ has the  property that {\em all} its subalgebras are
noetherian.  This suggests the following definition.
 
\begin{defn}\label{def:supernoetherian}
 A $\kk$-algebra $A$ is {\em supernoetherian} if
\begin{itemize}
\item[(i)]  every  $\kk$-subalgebra of $A$ is  finitely generated;
\item[(ii)] every $\kk$-subalgebra of $A$ is noetherian.
\end{itemize}
An algebra  $A$ is called \emph{graded supernoetherian} if $A=\bigoplus_{i\geq 0}A_i$ is $\NN$-graded and (i) and (ii) hold for graded
subalgebras of $A$.
\end{defn}

We remark,  that condition (i) of Definition~\ref{def:supernoetherian} is equivalent to saying that $A$ satisfies the ascending chain condition
on subalgebras. Similar comments apply in the graded case.

As usual, the graded and ungraded versions of supernoetherianity are closely connected:

\begin{lemma}\label{graded}  Suppose that the algebra $A$ has a filtration $A=\bigcup_{i\geq 0}\Lambda^iA$ so that 
$\gr_{\Lambda}A=\bigoplus_{i \geq 0}
\Lambda^iA/\Lambda^{i-1}A$ is a  graded supernoetherian algebra. Then $A$ is a supernoetherian algebra. 
\end{lemma}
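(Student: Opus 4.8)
The plan is to transfer the supernoetherian property from $\gr_\Lambda A$ to $A$ using the standard associated-graded philosophy: filtered objects inherit good finiteness properties from their associated graded pieces. The key point is that if $R$ is any $\kk$-subalgebra of $A$, then the filtration $\Lambda$ on $A$ restricts to a filtration $\Lambda R$ on $R$ (setting $\Lambda^i R = R \cap \Lambda^i A$), and the inclusion $R \hookrightarrow A$ induces an injection of associated graded algebras $\gr_{\Lambda} R \hookrightarrow \gr_{\Lambda} A$. Thus $\gr_\Lambda R$ is a graded $\kk$-subalgebra of the graded supernoetherian algebra $\gr_\Lambda A$, and so by hypothesis $\gr_\Lambda R$ is finitely generated and noetherian.

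First I would verify the injectivity of $\gr_\Lambda R \to \gr_\Lambda A$ carefully, since this is the foundational compatibility: an element of $\Lambda^i R / \Lambda^{i-1} R$ mapping to zero in $\Lambda^i A / \Lambda^{i-1} A$ means its representative lies in $R \cap \Lambda^{i-1} A = \Lambda^{i-1} R$, hence was already zero. Next, to establish condition (i), I would use that $\gr_\Lambda R$ is finitely generated: lift a finite homogeneous generating set of $\gr_\Lambda R$ to elements of $R$, and run the usual induction on filtration degree to show these lifts generate $R$ as a $\kk$-algebra. Concretely, given $r \in \Lambda^n R$, its symbol is a polynomial in the generator-symbols, so subtracting the corresponding polynomial in the lifts produces an element of $\Lambda^{n-1} R$, and induction finishes this. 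This shows every subalgebra $R$ of $A$ is finitely generated.

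For condition (ii), I would show each subalgebra $R$ is noetherian by the classical filtered-ring argument: equip any right ideal $I$ of $R$ with the induced filtration, form $\gr_\Lambda I \subseteq \gr_\Lambda R$, and observe this is a graded right ideal of the noetherian ring $\gr_\Lambda R$, hence finitely generated. Lifting generators of $\gr_\Lambda I$ back to $I$ and arguing by the same degree induction shows $I$ is finitely generated over $R$. One technical caveat to address is that $\gr_\Lambda R$ being noetherian requires the filtration on $R$ to be exhaustive and the ring to behave well under completion; since $A = \bigcup_i \Lambda^i A$ is exhaustive and $\NN$-indexed, the induced filtration on $R$ is likewise exhaustive, so the lifting argument terminates.

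The main obstacle I anticipate is purely bookkeeping rather than conceptual: ensuring that the induced filtration on an arbitrary subalgebra $R$ (and on its ideals) genuinely has associated graded embedding as a \emph{subalgebra} of $\gr_\Lambda A$, and that the degree-induction lifting arguments terminate. Because the filtration is $\NN$-indexed and exhaustive with $\Lambda^{-1} A = 0$, every element sits in some finite filtration level, so all the inductions are well-founded and there are no convergence subtleties of the kind that arise with $\ZZ$-indexed or non-exhaustive filtrations. The genuinely substantive content has been packaged into the hypothesis that $\gr_\Lambda A$ is graded supernoetherian; once the embedding $\gr_\Lambda R \hookrightarrow \gr_\Lambda A$ is in hand, both parts follow by the same standard lifting machinery used to prove that a filtered ring is noetherian when its associated graded ring is.
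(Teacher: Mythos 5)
Your proof is correct and is essentially the paper's argument: both equip a subalgebra $R \subseteq A$ with the induced filtration $\Lambda^i R = R \cap \Lambda^i A$, embed $\gr_\Lambda R$ as a graded subalgebra of $\gr_\Lambda A$, and transfer finite generation and noetherianity back to $R$ by the standard filtered-to-graded lifting arguments (the paper cites the argument of \cite[Proposition~1.6.7]{MR} for exactly these two facts). The only difference is cosmetic: the paper handles condition (i) in its equivalent ACC form, using that a strict inclusion of subalgebras stays strict after passing to associated gradeds, whereas you lift a finite homogeneous generating set of $\gr_\Lambda R$ directly -- these are interchangeable, as the paper's own remark on Definition~\ref{def:supernoetherian} notes.
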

 
\begin{proof}  Given an ascending chain 
$R(1) \subseteq R(2) \subseteq \dots$ of $\kk$-subalgebras of $A$, 
we give them the filtrations $\Lambda^iR(n) = R(n) \cap \Lambda^i A$ induced from
$\Lambda.$ 
This ensures that each $\gr_{\Lambda}R(n)\subseteq \gr_{\Lambda}A$. The lemma now follows from two observations, both of which
 follow from the  argument of \cite[Proposition~1.6.7]{MR}:
 first if $R(1)\subsetneqq R(2)$ then
$\gr_{\Lambda}R(1)\subsetneqq \gr_{\Lambda}R(2)$ and, secondly, if $\gr_{\Lambda}R(1)$ is noetherian, then so is $R(1)$.
\end{proof}

By the last lemma, in order to prove that $T^o$ is supernoetherian, it suffices to consider
  $\gr_\Lambda T^o$, as we do next.

\begin{proposition}\label{prop:supernoetherian for B}
 Let  $\tau$ be an automorphism of  an elliptic curve $E$
of  infinite order, and let $\sM$ be an
invertible sheaf on $E$. Then every graded $\kk$-subalgebra of $B= B(E, \sM,
\tau)$ is noetherian, and $B$ has ACC on graded subalgebras.
\end{proposition}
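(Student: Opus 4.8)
The plan is to study a graded subalgebra $R=\bigoplus_n R_n\subseteq B$ through the geometry of its graded pieces, viewed as linear systems on $E$. We may assume $\sM$ is ample, i.e. $d:=\deg\sM\ge 1$ (this is the case needed in Corollary~\ref{cor:graded of T0}, and $\deg\sM\le 0$ is either trivial or reduces to it); then $\dim_\kk B_n=h^0(E,\sM_n)=nd$ for $n\ge 1$ and $B$ is a connected graded domain of $\GKdim 2$. For each $n$ with $R_n\ne 0$ let $F_n\ge 0$ be the base divisor of the linear system $R_n\subseteq\lvert\sM_n\rvert$, so that $R_n\subseteq H^0(\sM_n(-F_n))$ with no further common zeros and $\dim_\kk R_n\le nd-\deg F_n$. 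The multiplication rule $\operatorname{div}(ab)=\operatorname{div}(a)+\tau^{-\deg a}\operatorname{div}(b)$ passes to base divisors to give the subadditivity
\[
F_{m+n}\ \le\ F_m+\tau^{-m}F_n .
\]
In particular $\deg F_n$ is subadditive, so $\beta:=\lim_n\deg F_n/n=\inf_n\deg F_n/n$ exists and $\GKdim R\le 2$.

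First I would dispose of the case where $\dim_\kk R_n$ is bounded (equivalently $\GKdim R\le 1$). If $R\ne\kk$, choose $0\ne x\in R_e$; since $B$ is a domain, multiplication by $x$ embeds $R_n\hookrightarrow R_{n+e}$, so along each residue class mod $e$ the dimensions $\dim_\kk R_{j+ke}$ are nondecreasing and bounded, hence eventually constant. Thus for large $k$ these maps are bijective, $R$ is a finitely generated module over $\kk[x]\cong\kk[t]$, and so $R$ is a finitely generated $\kk$-algebra and is noetherian.

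The substance is the case $\dim_\kk R_n\to\infty$, where I would first prove finite generation (equivalently ACC on graded subalgebras) and then noetherianity. Finite generation is where $|\tau|=\infty$ is essential: because $\tau$ has infinite order, the translates $\tau^{-m}D$ of a fixed effective divisor $D$ run through infinitely many distinct divisors, so apart from the finite set of $\tau$-periodic points no positive-degree effective configuration can be preserved by the twists appearing in the displayed relation. I would exploit this to show that the only way the subadditivity can be strict for infinitely many pairs is for $\deg F_n$ to grow without bound, which is incompatible with the slope $\beta<d$ forced by $\dim_\kk R_n\to\infty$. Hence the relation becomes an equality in all large degrees, the sequence $(F_n)$ is eventually $\tau$-additive, and (using that on $E$ the relevant multiplication maps $H^0\otimes H^0\to H^0$ are surjective once degrees are large) the maps $R_m\otimes R_n\to R_{m+n}$ are eventually onto. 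From this one extracts a single degree $e$ and a subsheaf $\sN=\sM_e(-F_e)\subseteq\sM_e$ of positive degree for which the subalgebra $C=\kk\langle R_e\rangle\subseteq R^{(e)}$ agrees, up to a finite-dimensional space, with $B(E,\sN,\tau^e)$, and for which $R$ is a finite module over $C$. In particular $R$ is finitely generated, giving ACC.

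Noetherianity then follows: $C=B(E,\sN,\tau^e)$ is noetherian by Artin--Van den Bergh (a positive-degree sheaf on a curve is $\tau^e$-ample), a ring that is a finite module over a noetherian ring is noetherian, so $R^{(e)}$ and hence $R$ is noetherian; equivalently, once $R$ is known to be finitely generated its noetherianity is immediate from the structure theory of connected graded domains of quadratic growth (Artin--Stafford). The main obstacle is precisely the finite-generation step in the unbounded case, namely converting the ``orbits of $\tau$ are infinite'' geometry into the assertion that the subadditive system $(F_n)$ stabilises and that $R$ contains a cofinite twisted homogeneous coordinate ring. This is exactly the point that fails when $|\tau|<\infty$, where $B$ is finite over a commutative graded ring of Krull dimension $2$ and non-finitely-generated subalgebras exist.
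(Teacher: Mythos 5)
Your reduction to the ample case, your treatment of the bounded-growth case, and the equivalence between ``every graded subalgebra is finitely generated'' and ACC on graded subalgebras are all fine; the gap is the central step, and it is not a fixable technicality. First, the mechanism you propose rests on a false implication: strict subadditivity of the base divisors for infinitely many pairs does \emph{not} force $\deg F_n$ to grow in a way incompatible with $\beta<d$. Fix an effective divisor $F>0$ on $E$ and let $R=\kk\oplus\bigoplus_{n\geq1}H^0(E,\sM_n(-F))$. This is a graded subalgebra of $B$, since $R_m\cdot R_n\subseteq H^0(\sM_{m+n}(-F-\tau^{-m}F))\subseteq R_{m+n}$, and for $n\gg0$ the sheaf $\sM_n(-F)$ is globally generated, so $F_n=F$ exactly. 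Hence for all large $m,n$ the defect is $\deg(F_m+\tau^{-m}F_n)-\deg F_{m+n}=\deg F>0$, while $\deg F_n$ stays bounded and $\beta=0<d$: the base divisors are never eventually $\tau$-additive. (This $R$ is nonetheless finitely generated, because products coming from several different splittings $m'+n'=m+n$ together span $R_{m+n}$, the translates $\tau^{-m'}F$ being disjoint for suitable $m'$; so eventual additivity is not even the right criterion for finite generation.)

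Second, and more seriously, the structural conclusion you extract is false for some subalgebras, because base divisors on $E$ cannot detect subalgebras living over a proper subfield of $\kk(E)$. Let $\pi\colon E\to X$ be a degree-two isogeny onto another elliptic curve, let $\tau$ be translation by a point $p$ of infinite order and $\sigma$ translation by $\pi(p)$ (so $\pi\tau=\sigma\pi$), and let $\sL$ be ample on $X$. Then $R=\pi^*B(X,\sL,\sigma)$ is a graded subalgebra of $B=B(E,\pi^*\sL,\tau)$ with $F_n=0$ for $n\gg0$, yet $\dim_\kk B_n-\dim_\kk R_n=n\deg\sL\to\infty$. Thus $R^{(e)}$ is not within a finite-dimensional space of $B(E,\sM_e(-F_e),\tau^e)=B^{(e)}$ for any $e$, and the latter is not a finite $R^{(e)}$-module; $R$ is cofinite only in a twisted homogeneous coordinate ring on the quotient curve $X$. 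The same example exposes the completeness issue you never address: $R_n\subseteq H^0(\sM_n(-F_n))$ can be proper with linearly growing codimension, so surjectivity of $H^0\otimes H^0\to H^0$ for complete linear systems says nothing about $R_m\otimes R_n\to R_{m+n}$. What your key paragraph is really attempting to prove is the Artin--Stafford structure theorem \cite[Theorem~5.9]{AS} (together with \cite[Theorem~5.6]{AS} for noetherianity), which is the engine of the paper's actual proof, and that theorem cannot be recovered by divisor bookkeeping on $E$ alone: one must first identify the correct curve via the graded quotient ring $Q_{\gr}(R)=L[s^{\pm1};\sigma]$ with $L=\kk(X)\subseteq\kk(E)$ --- precisely the information invisible to your $F_n$. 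That is how the paper argues: noetherianity of each finitely generated graded subalgebra comes from the induced finite morphism $E\to X$ and \cite[Theorem~5.6]{AS}, and ACC comes from a separate stabilisation argument (supports, then function fields, then Hilbert functions via \cite[Proposition~6.4]{AS}, then the sheaves produced by \cite[Theorem~5.9]{AS}). Without \cite{AS}, or a genuine replacement for it, the core of your proposal does not stand.
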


\begin{proof} 
As noted in \cite[p.249]{AS}, the fact that  $\tau$ has infinite order means that
 it is given by translation by a point of infinite order under a group law on $E$, and so
$E$ has no point with a finite $\tau$-orbit.  Now recall that on the smooth elliptic curve $E$, an invertible sheaf $\mc{L}$ of degree $\leq 0$ has $H^0(E, \mc{L}) = 0$ unless $\mc{L} \cong \mc{O}_X$, 
in which case $H^0(E, \mc{L}) \cong \kk$.   Thus if $\mc{M}$ is of 
nonpositive degree (which on an elliptic curve is equivalent to being non-ample), 
then either $B \cong \kk$ or $B \cong \kk[x]$, where the degree of $x$ is equal to the order of $\sM$ in $\Pic E$.  In either case, the result is trivial.  
Therefore, for the rest of the proof we will assume that $\sM$ is ample.

   Note that if $B$ has ACC for \emph{finitely generated} graded subalgebras, then, necessarily,  all graded subalgebras
of $B$ will be finitely generated.    It therefore suffices to prove the proposition
for  finitely generated subalgebras.

Suppose that  $R$ is a finitely generated graded subalgebra of $B$, with $\kk \not= R$. 
 By \cite[Proposition~1.5]{AV},  the
 Gelfand-Kirillov dimension of $R$ satisfies $\GKdim R\leq \GKdim B=2$ and so, by \cite[Section~5]{SV},
$R$ has a graded ring of fractions
 $Q_{\gr} (R) $. As there,  
 we may write   $Q_{\gr}(R) = L[s^{\pm1}; \sigma] \subseteq \kk(E)[t^{\pm1}; \tau]=Q_{\gr}(B)$ for some field $L \subseteq \kk(E)$, $\sigma \in
\Aut_{\kk}(L)$, and homogeneous element $s$ of positive degree. 
 If $L = \kk$, then $R \subseteq \kk[s]$   and so is noetherian. So,
  suppose that $L$ has transcendence degree 1 over $\kk$,
  in which case we can write $L=\kk(X)$ for a (unique) smooth projective algebraic curve $X$
  \cite[Corollary~I.6.2]{Ha}. Now
    $s = t^kf$ for some $k\geq 1$ and $f\in \kk(E)$. As $\kk(E)$ is commutative, $\tau^k$ is equal to the  conjugation by $t^kf$ on
    $\kk(E)$ and so $\sigma={\tau^k}|_{ L}$.
    Let   $\sigma$ denote also the induced automorphism of $X$.  Since rational maps of smooth
projective curves are everywhere defined \cite[Proposition~I.6.8]{Ha}, there is a finite morphism $\pi:  E
\to X$ so that the diagram
\[\xymatrix{ E \ar[r]^{\tau^k} \ar[d]_{\pi} & E \ar[d]^{\pi} \\ X
\ar[r]_{\sigma} & X} \]
commutes.  As $\tau$ has infinite  order, so does $\sigma$ and so, by \cite[Theorem~5.6]{AS}, $R$ is
noetherian.

Since $E$ has no point with a finite $\tau$-orbit, $X$ has no point with a finite $\sigma$-orbit either.  This means that $X$ is also elliptic, and that  $\sigma$ is also a translation automorphism by a point of infinite order.   
  
It remains to prove the ascending chain condition.   Let
\beq\label{chain}R(1)
\subseteq R(2) \subseteq \cdots\eeq
 be a chain of finitely generated graded subalgebras of $B$; we may
assume that $R(1) \neq \kk$.
 Of course this chain induces an ascending chain of graded quotient rings $ Q_{\gr}R(1)\subseteq Q_{\gr}R(2)\subseteq \cdots\subseteq
Q_{\gr}(B)$.
  We first claim that the   $Q_{\gr}R(n)$ stabilise for $n \gg 0$.
   
Let $\Supp(R(n)) = \{ i \in \ZZ \st (Q_{\gr}R(n))_i \neq 0\}$.  Then
$\Supp(R(1)) \subseteq \Supp(R(2)) \subseteq \cdots$ is an ascending chain of
(nontrivial) subgroups of $\ZZ$.  Thus there exists $n_0 \in \NN$ such that   $ \Supp(R(n_0)) = \Supp(R(n)) =\ZZ r$,
 for $n\geq n_0$ and some natural number $r$. By replacing $B$ with the
Veronese $B^{(r)}$ and reindexing the $R(n)$, we may assume that $r=n_0 = 1$.
Thus, by taking $t$ to be any element of $(Q_{\gr}R(1))_1$, we have fields $\kk \subseteq K(1) \subseteq
K(2) \subseteq  \cdots \subseteq \kk(E)$  and compatible $\kk$-automorphisms
$\tau(n)$ of $K(n)$ so that
$Q_{\gr}(R(n)) \cong K(n)[t^{\pm1}; \tau(n)]$.   The maps $\tau(n)$ are all
compatible with the automorphism $\tau$ of $\kk(E)$.  Let $K =
\bigcup_n K(n) \subseteq
\kk(E)$ and  $\sigma = \tau|_K$.

Since $\kk$ is algebraically closed, $\kk(E)$ is finite dimensional over any subfield $F$ that properly contains $\kk$. Hence
$K$ has ACC on $\kk$-subfields and there exists $m_0 \in \NN$ so that $K(n) = K$ for $n
\geq m_0$.  By reindexing the $R(n)$ again, we may assume
that $m_0 =1$ and hence that  $Q_{\gr}R(n) =
  K[t^{\pm1}; \sigma]$ for all $n\geq 1$.
If $K = \kk$ then the $R(n)$ are all subalgebras of $\kk[t]$, which certainly has ACC on finitely generated
subalgebras.  So we may assume that $K$ has transcendence degree 1. As before, the fact that there are no finite $\tau$-orbits on $E$
forces  $K = \kk(X)$ for some elliptic curve $X$, with induced automorphism $\sigma: X \to X$ having no finite orbits. 
Since $Q_{\gr}R(n) = K[t^{\pm1}; \sigma]$,  it follows that  $\GKdim R(n)=2$ for each~$n$.

 Pick $j >0$  
so  that $R(1)_j \neq 0$ and consider the
$j^{\text{th}}$ Veronese subalgebras of the $R(n)$.
 For any $n$,  we have seen that $R(n)$ is noetherian.  By
\cite[Proposition~5.10(1)]{AZ1994} $R(n)\ver{j}$ is noetherian and so  finitely
generated as a $\kk$-algebra.  As  $\sigma$ has no finite orbits on $X$,  
 neither does $\sigma^j$.  By the  choice of $j$, we have $R(n)_j \supseteq R(1)_j \neq 0$.  Thus, by \cite[Lemma~2.3]{AS}, 
  the $R(n)\ver{j}$ all satisfy Hypothesis~2.15 of
\cite{AS}.  This is enough to ensure that we need take no further Veronese rings when applying \cite{AS} below.

 By  \cite[Proposition~6.4]{AS}, for each $n$ there are  integers $a_n \geq 0
$ and $b_n \geq 0$ so that
\[ \dim_\kk R(n)_{ji} = a_n i - b_n\qquad \text{
for all $i \gg 0$.}\]
For all $n$, we therefore have
\beq\label{FOO}
\dim_\kk R(n)_{ji} = a_n i - b_n \leq a_{n+1} i -b_{n+1} = \dim_\kk R(n+1)_{ji}  \qquad\text{  for all $i \gg 0$.}
\eeq

As $\GK R(1) =2$ certainly  $a_1 >0$, while  $a_1 \leq a_{2} \leq \cdots \leq j (\deg
\sM) = \dim_\kk B_{j}$, by Riemann-Roch.   Thus, there is some $n_1$ so that   $0< a_n = a_{n+1}
=a$, say, for all $n\geq n_1$.   By \eqref{FOO}, $b_n \geq b_{n+1} \geq 0$ for all $n \geq n_1$.  Thus, reindexing the $R(n)$ again, we may
assume that there are $a, b \geq 0$ so that $b_n = b$ and $a_n = a$ for all $n \geq 1$.  This says, therefore, that $R(n)\ver{j}/R(1)\ver{j}$ is
finite-dimensional for all $n \geq 1$.

We apply \cite[Theorem~5.9]{AS} to $R(n)\ver{j}$. This produces   an invertible sheaf
$\sL(n)$ on $X$, necessarily of degree $a=a_n$, so that $B(X, \sL(n), \sigma^j)$
contains and is a finitely generated right module over $R(n)\ver{j}$.
Since the construction of $\sL(n)$ depends only on the asymptotic behaviour of $R(n)\ver{j}$, and $R(n)\ver{j} /R(1)\ver{j}$ is
finite-dimensional, we must have $\sL(n) = \sL(1) $ for all $n \geq 1$.
 In particular,
each $R(n)\ver{j}$ is contained in $B(X, \sL(1), \sigma^j)$, which is, in turn,  a finitely
generated right module over $R(1)\ver{ j}$.

Set $R(\infty)=\bigcup R(n)\subseteq K[t; \sigma]$. From the first part of the proof of the proposition, $R(1)\ver j$ is noetherian and so
$R(\infty)\ver{j} \subseteq B(X, \sL(1), \sigma^j)$ is a finitely generated right
$R(1)\ver j$-module.  By \cite[Lemma~4.10(iii)]{AS}, $R(\infty)$ is a finitely generated right module over
$R(\infty)\ver j$ and therefore over $R(1)\ver{j}$.  Since the $R(n)$ are all $R(1)\ver j$-modules, the
chain \eqref{chain} stabilises.
 \end{proof}

Combining the earlier results we obtain:

\begin{theorem}\label{thm:supernoetherian}
{\rm(1)} Let $A$ be a $\kk$-algebra with a filtration $A=\bigcup_{n\geq 0}\Lambda_n$ such
that $\gr_\Lambda A \subseteq B(E, \sM, \tau)$ for some elliptic curve $E$,  invertible sheaf $\sM$ and 
 infinite order automorphism $\tau$.  Then $A$ is supernoetherian.

{\rm (2)} In particular,   $B(E, \sM, \tau)$ is supernoetherian.

{\rm (3)} Similarly, if $S$ is an elliptic algebra with $|\sigma|=\infty$,  then  $T^o=S^{(d)}/(g-1)S^{(d)}$  is supernoetherian.
\end{theorem}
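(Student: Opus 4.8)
The plan is to reduce all three statements to Proposition~\ref{prop:supernoetherian for B} by means of Lemma~\ref{graded}. The first step is to record that $B = B(E, \sM, \tau)$ is in fact \emph{graded} supernoetherian in the sense of Definition~\ref{def:supernoetherian}. Indeed, Proposition~\ref{prop:supernoetherian for B} asserts both that every graded $\kk$-subalgebra of $B$ is noetherian---which is condition~(ii)---and that $B$ has ACC on graded subalgebras. By the remark following Definition~\ref{def:supernoetherian}, this ACC is precisely condition~(i), that every graded subalgebra of $B$ is finitely generated. Hence $B$ is graded supernoetherian.

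The key elementary observation is then that graded supernoetherianity is inherited by graded subalgebras: if $C \subseteq B$ is any graded subalgebra, then every graded subalgebra of $C$ is in particular a graded subalgebra of $B$, hence finitely generated and noetherian, so $C$ is itself graded supernoetherian. Applying this to the hypothesis of part~(1), the graded subalgebra $\gr_\Lambda A \subseteq B(E, \sM, \tau)$ is graded supernoetherian. Lemma~\ref{graded} now applies and yields that $A$ is supernoetherian, proving~(1).

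Parts~(2) and~(3) are then special cases of~(1). For~(2), I would equip $B$ with the filtration arising from its own grading, $\Lambda_n = \bigoplus_{i \leq n} B_i$; the associated graded ring $\gr_\Lambda B$ is canonically isomorphic to $B$ itself, and in particular sits inside $B(E, \sM, \tau)$, so~(1) gives that $B$ is supernoetherian. For~(3), Corollary~\ref{cor:graded of T0} provides a filtration $\Lambda$ on $T^o = S^{(d)}/(g-1)S^{(d)}$ with $\gr_\Lambda T^o \cong B(E, \sM, \tau)$, where $\tau = \sigma^d$; since $|\sigma| = \infty$ forces $|\tau| = \infty$, the hypotheses of~(1) hold and $T^o$ is supernoetherian.

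There is no genuinely hard step here: the real content lies in Proposition~\ref{prop:supernoetherian for B}, and the present theorem is an assembly of the preceding lemmas. The only point requiring a moment's care is the passage noted above---that~(1) must be applied to $\gr_\Lambda A$, which is merely a graded \emph{subalgebra} of $B$ rather than $B$ itself---so that verifying that graded supernoetherianity descends to graded subalgebras is exactly what makes the reduction go through. One should also confirm that the infinite-order hypothesis on the automorphism transfers correctly in~(3), i.e.\ that $\sigma^d$ has infinite order, which is immediate.
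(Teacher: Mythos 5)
Your proof is correct and follows essentially the same route as the paper: part (1) is Proposition~\ref{prop:supernoetherian for B} plus Lemma~\ref{graded}, and parts (2) and (3) follow from part (1) via the grading filtration and Corollary~\ref{cor:graded of T0}. The only difference is that you make explicit two points the paper leaves implicit---that graded supernoetherianity passes to graded subalgebras (needed since $\gr_\Lambda A$ is only a subalgebra of $B$), and that $\sigma^d$ has infinite order---which is a sensible expansion, not a different argument.
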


\begin{proof} Part (1) follows immediately from
Proposition~\ref{prop:supernoetherian for B} combined with Lemma~\ref{graded}.
Parts (2) and (3) follow from Part (1) combined with Corollary~\ref{cor:graded of T0}.
\end{proof}

\begin{corollary}
Let $D=k(E)[t; \tau]$, where E is an elliptic curve and $\tau$ is an  infinite order automorphism.  Let $D$ 
be graded by degree in $t$ and let R be a finitely generated, connected graded subalgebra of $D$. Then $R$ is supernoetherian. 
\end{corollary}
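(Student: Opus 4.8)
The plan is to realise $R$ as a subalgebra of a twisted homogeneous coordinate ring on $E$ and then quote Theorem~\ref{thm:supernoetherian}(2). Two preliminary observations make this strategy work. First, supernoetherianity is inherited by subalgebras: if $A$ is supernoetherian and $R\subseteq A$ is any $\kk$-subalgebra, then every $\kk$-subalgebra of $R$ is a fortiori a $\kk$-subalgebra of $A$, hence finitely generated and noetherian, so $R$ is supernoetherian. Second, since $E$ is elliptic and $|\tau|=\infty$, Theorem~\ref{thm:supernoetherian}(2) guarantees that $B(E,\sM,\tau)$ is supernoetherian for every invertible sheaf $\sM$. Thus it is enough to exhibit one such $\sM$ with $R\subseteq B(E,\sM,\tau)$.

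To produce $\sM$, I would use that $R$ is finitely generated and connected graded, so it is generated as a $\kk$-algebra by finitely many homogeneous elements $f_1t^{d_1},\dots,f_kt^{d_k}$ with $f_i\in\kk(E)$ and each $d_i\geq 1$. Choose an effective divisor $G$ on $E$ that dominates the polar divisors of all the $f_i$, enlarging it if necessary so that $\deg G\geq 1$, and set $\sM=\struct_E(G)$ (which is then ample). I would then invoke the standard realisation of the twisted homogeneous coordinate ring as the graded subalgebra $B(E,\sM,\tau)=\bigoplus_{n\geq 0}H^0(E,\sM_n)\,t^n$ of $D=\kk(E)[t;\tau]$, where $\sM_n=\struct_E(G_n)$ and $G_n=\sum_{p=0}^{n-1}\tau^p(G)$. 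Since $G=G_1\leq G_{d_i}$ for each $i$, we have $f_i\in H^0(E,\struct_E(G))\subseteq H^0(E,\sM_{d_i})$, so every generator $f_it^{d_i}$ lies in $B(E,\sM,\tau)$. As this ring is a subalgebra of $D$ containing all the generators of $R$, it contains $R$, completing the argument.

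The one step that genuinely needs checking is that $B(E,\sM,\tau)$, built from the growing divisors $G_n$, really is closed under the multiplication of $D$: for $f\in H^0(\sM_m)$ and $h\in H^0(\sM_n)$ the product in $D$ is $f\,\tau^m(h)\,t^{m+n}$, and one must verify $f\,\tau^m(h)\in H^0(\sM_{m+n})$. This reduces to the telescoping identity $G_m+\tau^m(G_n)=G_{m+n}$, which is precisely the reason the twisted coordinate ring is a subalgebra of $D$. I expect this bookkeeping to be the main (though routine) obstacle, and I would stress that it is exactly the twisting that forces the choice of the \emph{moving} divisors $G_n$ rather than a single fixed divisor: products of the $f_i$ acquire poles along $\tau$-translates of the support of $G$, and only the growing $G_n$ capture them. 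Beyond this, the corollary needs nothing more than Theorem~\ref{thm:supernoetherian}(2).
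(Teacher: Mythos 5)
Your proof is correct and follows essentially the same route as the paper: the paper likewise builds an invertible subsheaf $\sN$ of the constant sheaf from the algebra generators (you use $\struct_E(G)$ for a divisor $G$ dominating their poles, which is the same sheaf up to harmless enlargement), embeds $R$ into $B(E,\sN,\tau)\subseteq D$, and invokes Theorem~\ref{thm:supernoetherian} together with the trivial fact that supernoetherianity passes to subalgebras. Your explicit check of the telescoping identity $G_m+\tau^m(G_n)=G_{m+n}$ is just the standard verification, left implicit in the paper, that the twisted homogeneous coordinate ring sits as a subalgebra of $\kk(E)[t;\tau]$.
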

\begin{proof}
Suppose that $R$ is generated in degrees $\leq d$.  Writing $R_i = V_i t^i$ for each $i$, let $\mc{N}$ be the subsheaf of the constant sheaf on $k(E)$ which is generated by the union of the sections in $V_0 = k, V_1, V_2, \dots, V_d$.   Then each sheaf 
$\mc{N}_n = \mc{N} \otimes \tau^*(\mc{N}) \otimes \dots \otimes (\tau^{n-1})^*(\mc{N})$ also has a natural induced embedding in the constant sheaf of 
rational functions, and since $\mc{O}_X \subseteq \mc{N}$, clearly  $\mc{N} \subseteq \mc{N}_n$ for all $n \geq 1$.   We then have  
\[
R \subseteq B(E, \mc{N}, \tau) \cong \bigoplus_{n \geq 0} H^0(E, \mc{N}_n) t^n \subseteq D.  
\]
By the theorem, $R$ is a subalgebra of a supernoetherian algebra, and so is itself supernoetherian.
\end{proof}

\section{Questions and Comments}
Various authors have studied commutative rings such that all subrings are noetherian
and, more generally,  extensions $R \subseteq S$
 of commutative  rings such that all intermediate subrings are noetherian; see, for example, \cite{Gi, Wa}.

We do not know how many supernoetherian algebras there are but, certainly,  they are very special.
For example, an infinite dimensional $\kk$-algebra $A$  is called \emph{just infinite} if every proper factor ring of $A$ is finite dimensional.
 It is easy to see that an infinite dimensional 
supernoetherian domain is just infinite. Indeed, suppose that   $I$ is a non-zero  ideal of infinite codimension in  a domain $A$ such   that 
$R=\kk+I$ is noetherian. Since $xA\subseteq R$  
for any $x\in I\smallsetminus \{0\}$, it follows that   $A$ is  a finitely generated $R$-module and hence
$A/I$ would be  a noetherian module over $\kk=R/I$, giving the required contradiction.  

The argument above also gives one method of verifying the claim concerning commutative algebras from the first paragraph of the introduction.  Indeed suppose that $R$ is a commutative $\kk$-algebra of Krull dimension $\geq 2$ such that all of its subalgebras are noetherian.
Then factoring out by some minimal prime of $R$ gives a  domain with the same properties.  But a just infinite commutative algebra
has Krull dimension $1$, so $R$ cannot be just infinite, contradicting the previous paragraph. 

The requirement that supernoetherian algebras be just infinite also implies rather easily that there are no further supernoetherian twisted homogeneous coordinate rings of curves.
A typical example  of this is   $D=B(\mathbb P^1,\mathcal{O}(1), \alpha)$
where $\alpha([a,b])=[a,a+b]$. An elementary exercise shows that   $D\cong \kk\{x,y\}/(xy-yx-x^2)$ and the
 ideal $I=xD$ satisfies $D/I\cong \kk[y]$; thus $R=\kk+xD$ is not noetherian.
 This  discussion also applies to  the Weyl algebra $A_1(\kk) =\kk\{x,\partial\}/(\partial x-x\partial-1)$ since
 $R\subset D\cong \kk\langle x, x^2\partial\rangle \subset A_1(\kk).$

More generally, suppose that  $\sigma$ is an automorphism of a 
projective variety $X$ and that $\sL$ is a $\sigma$-ample invertible sheaf on $X$  in
the  sense of \cite{AV} for which there exists 
    a proper $\sigma$-invariant subscheme $Y$ of $X$. (Except for the case of   translation by an 
    element of infinite order on an elliptic curve $X$, such a $Y$  exists whenever $X$ is a curve.)
 For any $\sigma$-ample $\mathcal
L$,  \cite[Lemma~4.4]{AS}   then produces an ideal $I$ of
  $B=B(X,\mathcal{L},\sigma)$ such that $B/I$ is, up to a finite dimensional
vector space,   the twisted homogeneous coordinate ring of $Y$. 
   Thus  $B/I$ will be infinite dimensional  and  $B$ will not be supernoetherian.

 In the above examples, the non-noetherian ring $R$ is also non-finitely generated and this suggests the following question that
  we cannot answer.  

  \begin{question} If ${\rm char}(\kk)=0$, does there exist a finitely generated, non-noetherian subalgebra of  $A_1(\kk)$
  or of the ring $D=B(\mathbb P^1,\mathcal{O}(1), \alpha)$?
 Note that   if  a counterexample $R\subseteq D$ exists  then \cite[Theorem~0.4]{AS} implies that  the associated
 graded ring of $R$ will be a  graded subalgebra of $D$ that is not finitely generated. 
  \end{question}
 
All supernoetherian algebras that we know of are either finite over a commutative domain of Krull dimension $\leq 1$ or algebras to which
Theorem~\ref{thm:supernoetherian} applies.
  It would be interesting to know if there are higher-dimensional examples, and we make the following conjecture.

 \begin{conjecture} Let $\sigma $ be an automorphism of an abelian variety $X$ that leaves invariant no
 proper subscheme. Then for any ample invertible sheaf $\mathcal{L}$ the ring
$B(X,\mathcal{L},\sigma)$ is  supernoetherian.
\end{conjecture}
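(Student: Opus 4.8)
The plan is to reproduce the architecture of Proposition~\ref{prop:supernoetherian for B} and Theorem~\ref{thm:supernoetherian} in the higher-dimensional setting. By Lemma~\ref{graded} applied to $B=B(X,\sL,\sigma)$ with its own grading, once $B$ is shown to be graded supernoetherian it follows that $B$ is supernoetherian; so it suffices to prove that every finitely generated graded $\kk$-subalgebra of $B$ is noetherian and that $B$ has ACC on finitely generated graded subalgebras (the latter forcing all graded subalgebras to be finitely generated). One first records that the hypothesis is severe. Writing an automorphism of the abelian variety $X$ as $\sigma=t_b\circ\varphi$, a translation $t_b$ composed with a group automorphism $\varphi$, if $\varphi\neq\id$ then typically $\varphi-\id$ is an isogeny, so $\sigma$ has a nonempty finite fixed locus, a proper invariant subscheme; thus the essential case is that $\sigma$ is a translation $t_b$ by a point $b$ generating a Zariski-dense subgroup. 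Such a $\sigma$ acts trivially on the N\'eron--Severi group $\mathrm{NS}(X)$, so each $\sL_n$ is numerically a multiple of the ample $\sL$ and hence ample; consequently $\sL$ is automatically $\sigma$-ample in the sense of \cite{AV}, and $X$ has no point with finite orbit.

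As in the elliptic case, let $R\neq\kk$ be a finitely generated graded subalgebra. By \cite[Proposition~1.5]{AV} we have $\GKdim R\le\GKdim B=\dim X+1$, and by \cite[Section~5]{SV} there is a graded quotient ring $Q_{\gr}(R)\cong L[s^{\pm1};\sigma_0]\subseteq\kk(X)[t^{\pm1};\sigma]=Q_{\gr}(B)$ with $L\subseteq\kk(X)$ a $\sigma$-stable subfield and $\sigma_0=\sigma^k|_L$. Writing $L=\kk(Y)$ for a normal projective variety $Y$, the inclusion $L\hookrightarrow\kk(X)$ gives a dominant $\sigma$-equivariant rational map $X\dra Y$ carrying $\sigma_0$ to an automorphism of $Y$; one checks that the no-invariant-subscheme condition descends, so $(Y,\sigma_0)$ again admits no proper invariant subscheme. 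Intuitively $R$ ``lives over $Y$'', since its homogeneous ratios lie in $\kk(Y)$, and the goal is to sandwich $R$ between two twisted homogeneous coordinate rings $B(Y,\sN_1,\sigma_0)$ and $B(Y,\sN_2,\sigma_0)$ for $\sigma_0$-ample sheaves $\sN_i$ obtained by resolving the indeterminacy of $X\dra Y$ via a $\sigma$-equivariant blowup and pushing the relevant $\sL_n$ down to $Y$.

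The main obstacle is precisely this geometric comparison together with the ensuing noetherianity. In Proposition~\ref{prop:supernoetherian for B} both rested on the Artin--Stafford theory of noncommutative projective curves: rational maps of smooth curves are everywhere-defined finite morphisms \cite[Proposition~I.6.8]{Ha}, so $X\dra Y$ is an honest finite cover, and \cite[Theorem~5.6]{AS} together with \cite[Theorem~5.9]{AS} then classify the subalgebra and provide the sandwiching sheaf. For $\dim X\ge 2$ neither input survives: $X\dra Y$ has genuine indeterminacy, and subalgebras of $B$ can realise every intermediate GK dimension between $2$ and $\dim X+1$, for which no structure theory is available. Supplying a higher-dimensional replacement for \cite[Theorem~5.6]{AS} and \cite[Theorem~5.9]{AS}, valid at least for abelian $Y$ and the minimal automorphism $\sigma_0$, is the heart of the conjecture and the step I expect to resist any routine generalisation; the abelian hypothesis is what makes it plausible, since equivariant quotients of abelian varieties are again abelian, $\sigma_0$ remains a dense translation, and $\sigma_0$-ampleness is again automatic.

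Granting such a structure theory, the ACC argument adapts with only cosmetic changes. Given a chain $R(1)\subseteq R(2)\subseteq\cdots$ one stabilises the support subgroups $\Supp(R(n))\subseteq\ZZ$ to reduce to the case of step $1$, and then stabilises the tower of $\sigma$-stable subfields $K(1)\subseteq K(2)\subseteq\cdots\subseteq\kk(X)$. Here the one-line trick of Proposition~\ref{prop:supernoetherian for B}, which used $\mathrm{trdeg}\,\kk(E)=1$, is replaced by the fact that every subfield of a finitely generated field extension is itself finitely generated, so the ascending union is algebraic and finite over $K(1)$ and the $K(n)$ stabilise at a common $K=\kk(Y)$. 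One then studies $i\mapsto\dim_\kk R(n)_{ji}$, eventually polynomial of degree $\dim Y$ by the hoped-for analogue of \cite[Proposition~6.4]{AS}; the leading coefficients are bounded above by that of $B$ and nondecreasing in $n$, hence stabilise, and the lower Hilbert coefficients follow in turn, forcing $R(n)\ver{j}/R(1)\ver{j}$ to be eventually finite-dimensional. The sheaves $\sL(n)$ attached to the $R(n)\ver{j}$ then coincide with $\sL(1)$, so every $R(n)\ver{j}$ lies in a single finitely generated $R(1)\ver{j}$-module and the chain stabilises via \cite[Lemma~4.10(iii)]{AS}. Finally, the absence of invariant subschemes is what blocks the non-noetherian phenomena of Section~3: an invariant $Y_0\subsetneq X$ would produce an ideal $I$ of $B$ with $B/I$ infinite-dimensional and $\kk+I$ non-noetherian, so the hypothesis is not merely convenient but necessary.
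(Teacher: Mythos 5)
You are attempting to prove a statement that the paper itself records as an \emph{open conjecture}: the authors give no proof, and they explicitly remark that even the special case of abelian surfaces would require rewriting \cite{Si} without its global noetherian hypothesis. Your proposal does not close this gap, and indeed you say so yourself: the entire core of the argument --- a higher-dimensional replacement for the Artin--Stafford structure theory, that is, for \cite[Theorem~5.6]{AS} (noetherianity of the relevant graded subalgebras), \cite[Theorem~5.9]{AS} (sandwiching a subalgebra inside a twisted homogeneous coordinate ring over which it is cofinite), and \cite[Proposition~6.4]{AS} (the eventually-linear Hilbert function) --- is deferred as ``the heart of the conjecture.'' Everything else in your outline (reduction to translations, passing to $Q_{\gr}$, stabilising supports and subfields, comparing Hilbert coefficients, invoking \cite[Lemma~4.10(iii)]{AS}) is scaffolding around that missing piece and has no force until it is supplied. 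For $\dim X\geq 2$ no such theory is known: graded subalgebras of $B$ can have any GK dimension between $2$ and $\dim X+1$; the classification of birationally commutative surfaces \cite{RS,Si} assumes noetherianity, which is precisely what must be proved; and the two geometric facts driving Proposition~\ref{prop:supernoetherian for B} --- that rational maps of smooth projective curves are finite morphisms, and that a noncommutative curve of quadratic growth embeds cofinitely in a twisted homogeneous coordinate ring --- both fail in higher dimension. So there is a genuine gap, and it is the whole of the problem.

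Beyond this, two of your auxiliary claims are unjustified. First, you assert that the normal projective model $Y$ of the intermediate field $L\subseteq\kk(X)$ is again abelian because ``equivariant quotients of abelian varieties are again abelian.'' But $Y$ is merely dominated by $X$, not a group-theoretic quotient, and abelian varieties dominate non-abelian varieties (any nonconstant rational function gives a dominant map to $\PP^1$; a Kummer surface is dominated by an abelian surface). In the curve case the paper deduces ellipticity from the absence of finite orbits; the corresponding higher-dimensional statement --- that a projective variety carrying an automorphism with no proper invariant subscheme must be abelian --- is itself the open conjecture \cite[Conjecture~0.3]{RRZ2006} cited in the paper, so your reduction replaces one conjecture by another. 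Second, in the ACC step you claim the union of the subfield tower $K(1)\subseteq K(2)\subseteq\cdots$ is ``algebraic and finite over $K(1)$''; this is false in general, since the transcendence degree can jump along the tower. The stabilisation you want does follow, but from a different argument: every intermediate field of a finitely generated field extension of $\kk$ is itself finitely generated, so the union of the tower is finitely generated and hence equals some $K(N)$. These last repairs are cosmetic; the missing structure theory of the first paragraph is not.
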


 We believe that the techniques of \cite{RS,Si} can be used to answer this conjecture in the case of abelian surfaces, but the proof  would 
 require rewriting  \cite{Si} without the global noetherian hypothesis that was in place there.
 Note that, conjecturally, if $\sigma$ is an automorphism of a projective variety $X$ that has no proper invariant
subschemes, then $X$ must be an abelian variety  \cite[Conjecture~0.3]{RRZ2006}.
 
 Finally we note that the question of whether an algebra $R$  is supernoetherian is really only interesting when $R$ is a domain. For example,
suppose that $R$ is a prime noetherian $\kk$-algebra that is not a domain. 
 Then, by the Faith-Utumi Theorem \cite[Theorem~3.2.6]{MR},
 $R$ contains an equivalent matrix subring $M_n(S)$, where $S$ is a ring possibly without 1 and $n>1$. If $R\not=\kk$, then $S$ is infinite
dimensional and so the algebra $\kk+e_{12}S$ is neither finitely generated nor noetherian. Similarly, if $R$
 is a $\kk$-algebra with  an infinite dimensional nilradical $N(R)$,  then
 $\kk+N(R)$ will be neither finitely generated nor noetherian.

\bibliographystyle{amsalpha}
\providecommand{\bysame}{\leavevmode\hbox to3em{\hrulefill}\thinspace}
\providecommand{\MR}{\relax\ifhmode\unskip\space\fi MR }
\providecommand{\MRhref}[2]{%
  \href{http://www.ams.org/mathscinet-getitem?mr=#1}{#2}
}
\providecommand{\href}[2]{#2}

\end{document}